\documentclass[11pt]{amsart}
\usepackage{amsmath, amssymb, amsfonts, amsthm, mathtools, verbatim, dsfont, graphicx, enumerate, pifont,tikz,tikz-cd}

\usepackage{hyperref}

\usepackage[backend=bibtex]{biblatex}

\headheight=8pt
\topmargin=0pt
\textheight=610pt
\textwidth=432pt
\oddsidemargin=18pt
\evensidemargin=18pt

\theoremstyle{theorem}
\newtheorem{theorem}{Theorem}[section]
\newtheorem{lemma}[theorem]{Lemma}
\newtheorem{proposition}[theorem]{Proposition}

\theoremstyle{definition}
\newtheorem{definition}[theorem]{Definition}
\newtheorem{remark}[theorem]{Remark}

\newtheorem{example}[theorem]{Example}

\providecommand{\abs}[1]{\left\lvert#1\right\rvert}

\providecommand{\OP}[1]{\operatorname{#1}}

\DeclarePairedDelimiter\floor{\lfloor}{\rfloor}

\addbibresource{bibliography.bib}

\begin{document}

\author{Igor Uljarevic}
\title{Partitions of the set of Natural numbers and symplectic homology}
\maketitle
\begin{center}\today \end{center}

\begin{abstract}
We prove Tamura's theorem on partitions of the set of positive integers (a generalization of the more famous Rayleigh-Beatty theorem) using the positive $\mathbb{S}^1$-equivariant symplectic homology. 
\end{abstract}

\section{Introduction}

In his famous book, \textit{The Theory of Sound}, John William Strutt noticed the following property of the sequence
\[ n\mapsto \floor{n\cdot \alpha}, \]
where $\floor{\cdot}$ stands for the floor function, and $\alpha$ is a positive irrational number \cite[page~123]{MR0016009}.

\begin{theorem}[Rayleigh]
Let $\alpha$ and $\beta$ be positive irrational numbers such that
\[\frac{1}{\alpha}+\frac{1}{\beta}=1.\]
Then, the sets $\{\floor{n\alpha}\:|\: n\in\mathbb{N}\}$ and $\{\floor{n\beta}\:|\:n\in \mathbb{N}\}$ partition the set of natural numbers $\mathbb{N}.$ 
\end{theorem}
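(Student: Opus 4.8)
The plan is to prove this via a counting argument that shows each natural number is covered exactly once across the two sequences. The key observation is that $\frac{1}{\alpha} + \frac{1}{\beta} = 1$ forces both $\alpha, \beta > 1$, so the two sequences $n \mapsto \floor{n\alpha}$ and $n \mapsto \floor{n\beta}$ are each strictly increasing and hence injective. Thus it suffices to show that for every positive integer $N$, the number of elements of $\{\floor{n\alpha} : n \in \mathbb{N}\}$ that are $\le N$ plus the number of elements of $\{\floor{n\beta} : n \in \mathbb{N}\}$ that are $\le N$ equals exactly $N$.

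The core of the argument rests on a single clean counting identity. For a positive irrational $\alpha > 1$, the quantity $\#\{n \in \mathbb{N} : \floor{n\alpha} \le N\}$ equals the number of positive integers $n$ with $n\alpha < N+1$ (using irrationality to avoid equality), which is $\floor{\frac{N+1}{\alpha}}$. Because $\alpha$ is irrational, $\frac{N+1}{\alpha}$ is never an integer, so I can rewrite $\floor{\frac{N+1}{\alpha}} = \frac{N+1}{\alpha} - \left\{\frac{N+1}{\alpha}\right\}$ where $\{\cdot\}$ denotes the fractional part. Summing the two counts and invoking $\frac{1}{\alpha} + \frac{1}{\beta} = 1$ gives
\[
\floor{\tfrac{N+1}{\alpha}} + \floor{\tfrac{N+1}{\beta}} = (N+1)\left(\tfrac{1}{\alpha}+\tfrac{1}{\beta}\right) - \left\{\tfrac{N+1}{\alpha}\right\} - \left\{\tfrac{N+1}{\beta}\right\} = (N+1) - \left(\left\{\tfrac{N+1}{\alpha}\right\} + \left\{\tfrac{N+1}{\beta}\right\}\right).
\]
The two fractional parts are each strictly between $0$ and $1$, and I claim they sum to exactly $1$: this follows because their total, $(N+1)(\tfrac1\alpha+\tfrac1\beta)$ minus an integer, must itself be an integer in the open interval $(0,2)$, hence equal to $1$. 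Therefore the combined count is exactly $N$.

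The main obstacle, and the step requiring the most care, is the handling of irrationality to pin down the strict inequalities. I must verify that no natural number is ever hit by both sequences (disjointness) and that none is skipped (surjectivity onto $\mathbb{N}$), and the entire argument hinges on the fact that $n\alpha$ and $\frac{N+1}{\alpha}$ are never integers. If $\alpha$ were rational, boundary cases would spoil the count. Once the counting identity above is established for all $N$, disjointness and surjectivity both follow immediately: the two covering counts sum to $N$ for every $N$, which by induction (comparing the count at $N$ and $N-1$) forces each integer to lie in precisely one of the two sets. I would conclude by assembling these observations into the partition statement.
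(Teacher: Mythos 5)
Your proof is correct, but it follows a genuinely different route from the paper, which in fact never proves Rayleigh's theorem directly: there it is stated as motivation and follows as the $m=2$ case of Theorem~\ref{thm:main} (Tamura), using that $\frac{1}{\alpha}+\frac{1}{\beta}=1$ gives $\frac{\alpha}{\beta}=\alpha-1$, hence $\floor{n\alpha}=n+\floor*{n\frac{\alpha}{\beta}}$ and likewise for $\beta$. The paper's argument computes the positive $\mathbb{S}^1$-equivariant symplectic homology of the ellipsoid $E(\alpha,\beta)$ in two ways: by Theorem~\ref{thm:gutt} and Lemma~\ref{lem:index} the iterates of the two simple Reeb orbits contribute one generator in each degree $1+2\floor{n\alpha}$ and $1+2\floor{n\beta}$, while Proposition~\ref{prop:chiang} says the homology is $\mathbb{Q}$ exactly in the odd degrees $2j+1$; comparing forces disjointness (since each dimension is at most $1$) and covering (since no dimension vanishes). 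Your argument is instead the classical elementary counting proof of Beatty's theorem: both sequences are strictly increasing since $\alpha,\beta>1$, the number of terms of each that are at most $N$ is $\floor*{\frac{N+1}{\alpha}}$ resp.\ $\floor*{\frac{N+1}{\beta}}$, and the two fractional parts sum to an integer strictly between $0$ and $2$, hence to $1$, so the counts sum to $N$ and the telescoping step pins each integer to exactly one sequence. What each approach buys: yours is short, self-contained, and uses nothing beyond the floor function; the paper's is vastly heavier machinery, but it explains the identity geometrically (the partition mirrors the Reeb dynamics on $\partial E$ together with the rigidity $\dim SH_k^{\mathbb{S}^1,+}\leqslant 1$) and it scales directly to the $m$-part generalization of Tamura, where the naive extension of Beatty sequences is impossible by Uspensky's theorem. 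One small misplacement of justification in your write-up: the equivalence $\floor{n\alpha}\leqslant N \Leftrightarrow n\alpha<N+1$ holds for every real number and needs no irrationality; irrationality is what lets you equate the count of $n<\frac{N+1}{\alpha}$ with $\floor*{\frac{N+1}{\alpha}}$ (ruling out $\frac{N+1}{\alpha}\in\mathbb{Z}$) and what makes both fractional parts strictly positive. That is a cosmetic slip, not a gap.
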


Here and in the rest of the paper, the verb ``partition'' is understood in the following sense.
 
\begin{definition}
Let $m\in \mathbb{N}$ and let  $A,A_1,\ldots, A_m$ be non-empty sets. The sets $A_1,\ldots, A_m$ are said to \textbf{partition} the set $A$ if they are pairwise disjoint (i.e. $A_i\cap A_j=\emptyset$ for $i\not=j$) and satisfy
\[A_1\cup\cdots\cup A_m=A.\]
\end{definition}

The direct generalization of Rayleigh's theorem (partitions of $\mathbb{N}$ in more than 2 subsets) is false, as it was shown by Uspensky \cite{MR1521314}.

\begin{theorem}[Uspensky]
Let $m\in\mathbb{N}$ and let $a_1,\ldots, a_n\in (0,\infty)$ be positive real numbers. If the sets
\[\{\floor{na_i}\:|\:n\in\mathbb{N} \},\quad i=1,\ldots, m,\]
partition $\mathbb{N},$ then either $m=1$ and $a_1=1$ or $m=2$ and $a_1,a_2$ are irrational numbers satisfying $\frac{1}{a_1}+\frac{1}{a_2}=1.$
\end{theorem}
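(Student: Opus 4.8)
The plan is to reduce the statement to a density computation together with a Ces\`aro averaging of an explicit error term. Throughout write $\mathbb{N}=\{1,2,\dots\}$ and $B_i=\{\floor{na_i}\:|\:n\in\mathbb{N}\}$. First I would record two elementary reductions. Since $\floor{1\cdot a_i}=0$ whenever $0<a_i<1$, and $0\notin\mathbb{N}$, the hypothesis $B_1\cup\cdots\cup B_m=\mathbb{N}$ forces $a_i\ge 1$ for every $i$. For $a_i\ge 1$ the map $n\mapsto\floor{na_i}$ is strictly increasing, hence injective, so the counting function
\[ f_i(N):=\#\{n\in\mathbb{N}\:|\:\floor{na_i}\le N\}=\ceil{(N+1)/a_i}-1 \]
equals $\abs{B_i\cap\{1,\dots,N\}}$. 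Because the $B_i$ partition $\mathbb{N}$, summing over $i$ gives the exact identity $\sum_{i=1}^m f_i(N)=N$ for all $N\ge 0$.

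Next I would introduce the error term $g_i(N):=f_i(N)-(N+1)/a_i=\ceil{(N+1)/a_i}-1-(N+1)/a_i$, which always lies in $[-1,0)$. From $\sum_i f_i(N)=N$ one obtains
\[ \sum_{i=1}^m g_i(N)=N-(N+1)\sum_{i=1}^m \frac{1}{a_i}; \]
since the left-hand side is bounded (each $g_i\in[-1,0)$), the coefficient of $N$ on the right must vanish, yielding the density identity $\sum_{i=1}^m 1/a_i=1$ and hence the cleaner relation $\sum_{i=1}^m g_i(N)=-1$ for every $N$.

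The heart of the argument is to average this relation. Writing $y=(N+1)/a_i$, one has $g_i(N)=-\{y\}$ unless $y\in\mathbb{Z}$, in which case $g_i(N)=-1$. If $a_i$ is irrational then $y\notin\mathbb{Z}$ for all $N$ and $1/a_i$ is irrational, so Weyl's equidistribution theorem gives the Ces\`aro mean $\bar g_i:=\lim_{K\to\infty}\tfrac1K\sum_{N=0}^{K-1}g_i(N)=-\int_0^1 t\,dt=-\tfrac12$. If instead $a_i=p_i/q_i$ in lowest terms (with $p_i\ge q_i\ge 1$), then $N\mapsto g_i(N)$ is periodic modulo $p_i$, the value $-1$ occurring exactly when $p_i\mid(N+1)$, and a short residue count gives $\bar g_i=-\frac{p_i+1}{2p_i}$. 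Averaging $\sum_i g_i\equiv -1$ over $N$ (a finite sum, so the limit passes inside), and letting $R$ denote the set of indices with $a_i$ rational, these two evaluations collapse into the single master relation
\[ 2=m+\sum_{i\in R}\frac{1}{p_i}. \]

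The theorem then falls out of this relation, since $\sum_{i\in R}1/p_i\ge 0$ forces $m\le 2$. If $m=1$ the relation reads $\sum_{i\in R}1/p_i=1$, which is impossible unless $a_1$ is rational with numerator $p_1=1$; combined with $a_1\ge 1$ this gives $a_1=1$. If $m\ge 2$ the relation forces $m=2$ and $R=\emptyset$, so $a_1,a_2$ are both irrational, and the density identity becomes $\frac{1}{a_1}+\frac{1}{a_2}=1$. I expect the main obstacle to be the rational case of the averaging step: one must verify that $g_i$ is genuinely periodic, evaluate the periodic mean exactly (the jump to $-1$ at multiples of $p_i$ is precisely what produces the extra $+1/p_i$ in the master relation), and confirm that the finitely many Ces\`aro limits are additive. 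The equidistribution input and the two initial reductions are routine by comparison.
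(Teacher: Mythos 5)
Your proposal is correct, and every step checks out: the reduction to $a_i\geqslant 1$ and injectivity of $n\mapsto\floor{na_i}$, the formula $f_i(N)=\ceil{(N+1)/a_i}-1$ (valid in both the integer and non-integer case), the boundedness argument forcing $\sum_i 1/a_i=1$ and hence $\sum_i g_i(N)=-1$, the Ces\`aro means $\bar g_i=-\tfrac12$ (irrational case, via Weyl) and $\bar g_i=-\tfrac{p_i+1}{2p_i}$ (rational case, via periodicity with period $p_i$ and the fact that $(N+1)q_i$ runs over all residues mod $p_i$), and the resulting master relation $2=m+\sum_{i\in R}1/p_i$, which indeed pins down exactly the two allowed configurations. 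A comparison with ``the paper's proof'' is not quite possible here: the paper does not prove Uspensky's theorem at all, but quotes it as background with a citation to Uspensky's original article, since its own contribution is a symplectic-homology proof of Tamura's theorem, not of this statement. So your argument should be measured against the classical literature, where proofs of Uspensky's theorem are typically combinatorial (analysis of gaps and of the three-distance-type structure of Beatty sequences). Your averaging argument is a genuinely clean alternative: it trades case analysis for one analytic input (Weyl equidistribution), and the way the rational case produces the extra $+1/p_i$ in the master relation --- the jump of $g_i$ to $-1$ exactly at multiples of $p_i$ --- is exactly the mechanism that simultaneously rules out $m\geqslant 3$, rules out rational entries when $m=2$, and forces $a_1=1$ when $m=1$. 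The one spot worth an extra sentence in a final write-up is the claim ``impossible unless $a_1$ is rational with numerator $p_1=1$'': spell out that $R=\emptyset$ gives $0\not=1$, so $R=\{1\}$ and $p_1=1$, whence $a_1=1/q_1\geqslant 1$ forces $q_1=1$.
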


The aim of this paper is to give an alternative proof to the following generalization of Rayleigh's theorem.

\begin{theorem}[Tamura~\cite{MR1374387}]\label{thm:main}
Let $m\in\mathbb{N}$ and let $a_1,\ldots,a_m\in(0,\infty)$ be positive real numbers such that $\frac{a_j}{a_k}\not\in\mathbb{Q}$ for $j\not=k.$ Then, the sets
\[A_j:=\left\{\sum_{k=1}^m\floor*{n\frac{a_j}{a_k}}\:|\: n\in\mathbb{N}\right\},\quad j=1,\ldots, m\]
partition $\mathbb{N}.$ 
\end{theorem}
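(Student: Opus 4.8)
The plan is to reinterpret each value $\sum_{k=1}^m \floor{n a_j/a_k}$ as a \emph{rank} in a single increasing superposition of the $m$ arithmetic sequences $\{p a_k : p\in\mathbb{N}\}$, $k=1,\dots,m$. Write $s_j(n):=\sum_{k=1}^m\floor{n a_j/a_k}$, so that $A_j=\{s_j(n):n\in\mathbb{N}\}$, and consider the set
\[ S:=\{\,p a_k : p\in\mathbb{N},\ 1\le k\le m\,\}\subset(0,\infty). \]
The hypothesis $a_j/a_k\notin\mathbb{Q}$ for $j\ne k$ guarantees that the map $(p,k)\mapsto p a_k$ is injective: an equality $p a_k=q a_l$ with $k\ne l$ would force $a_k/a_l\in\mathbb{Q}$. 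Hence $S$ may be enumerated in strictly increasing order $x_1<x_2<\cdots$, and there is a well-defined \textbf{labelling} $\phi\colon\mathbb{N}\to\{1,\dots,m\}$ sending $i$ to the unique index $k$ for which $x_i$ is an integer multiple of $a_k$.

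First I would prove the rank identity $x_{s_j(n)}=n a_j$. Counting the elements of $S$ that do not exceed $n a_j$ family by family gives
\[ \#\{x\in S : x\le n a_j\}=\sum_{k=1}^m\#\{p\in\mathbb{N}:p a_k\le n a_j\}=\sum_{k=1}^m\floor*{\frac{n a_j}{a_k}}=s_j(n). \]
Here incommensurability again intervenes: for $k\ne j$ the number $n a_j/a_k$ is irrational, so $\#\{p:p\le n a_j/a_k\}$ equals $\floor{n a_j/a_k}$ with no boundary coincidence, while the term $k=j$ contributes exactly $n$. Since $n a_j\in S$ is itself the largest element of $S$ that is $\le n a_j$, the displayed count is precisely its position, i.e. $x_{s_j(n)}=n a_j$, whence $\phi(s_j(n))=j$.

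The theorem then follows formally. As $n$ ranges over $\mathbb{N}$ the points $n a_j$ range over all integer multiples of $a_j$ in $S$ (no such multiple can coincide with a $q a_l$, $l\ne j$, by incommensurability), so $\{s_j(n):n\in\mathbb{N}\}$ is exactly the fibre $\phi^{-1}(j)$; that is, $A_j=\phi^{-1}(j)$. Because $\phi$ is a genuine function, its fibres are pairwise disjoint, giving $A_j\cap A_{j'}=\emptyset$ for $j\ne j'$; because each family $\{p a_j\}$ is infinite, $\phi$ is surjective and every fibre is non-empty; and because every element of $S$ carries some label, $\bigcup_j\phi^{-1}(j)=\mathbb{N}$. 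Thus $A_1,\dots,A_m$ partition $\mathbb{N}$.

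The only real content is the rank identity, so I expect the main obstacle to lie purely in making that step airtight — namely in systematically exploiting the pairwise incommensurability to rule out the coincidences $p a_k=q a_l$ and the integrality of $n a_j/a_k$ for $k\ne j$; everything downstream is bookkeeping. It is worth noting that the integers $s_j(n)$ are, up to the normalisation $(m-1)+2s_j(n)$, exactly the Conley--Zehnder indices of the iterated Reeb orbits on the boundary of the ellipsoid $E(a_1,\dots,a_m)$, which suggests the alternative route taken in this paper: realising $\phi^{-1}(j)$ as the set of integers $i$ for which the degree $(m-1)+2i$ part of the positive $\mathbb{S}^1$-equivariant symplectic homology of $E(a_1,\dots,a_m)$ is supported by the $j$-th family of Reeb orbits, so that the partition property becomes the statement that this homology is one-dimensional in each relevant degree.
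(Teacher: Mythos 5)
Your proof is correct, and it takes a genuinely different --- and entirely elementary --- route from the paper's. You merge the $m$ progressions $\{pa_k:p\in\mathbb{N}\}$ into a single increasing sequence and show that $s_j(n)=\sum_{k}\floor{na_j/a_k}$ is exactly the rank of $na_j$ in that sequence, so that each $A_j$ is a fibre of the labelling map $\phi$ and the partition property becomes formal; pairwise incommensurability enters only to make $(p,k)\mapsto pa_k$ injective. (One small correction of emphasis: $\#\{p\in\mathbb{N}:p\le x\}=\floor{x}$ holds for every real $x>0$, irrational or not; what irrationality actually buys in the counting step is that no point of $S$ lies in two families, so the sum of the per-family counts equals the cardinality of the union --- your ``no boundary coincidence'' is this disjointness, not a property of the floor function.) The paper instead computes $SH^{\mathbb{S}^1,+}_\ast(E(a_1,\ldots,a_m))$ in two ways: by Theorem~\ref{thm:gutt} and Lemma~\ref{lem:index} each iterate $\gamma_j^n$ contributes one generator in degree $m-1+2s_j(n)$, while Proposition~\ref{prop:chiang} says the homology is $\mathbb{Q}$ in degrees $m+2\ell-1$, $\ell\in\mathbb{N}$, and $0$ otherwise; comparing the two computations forces the values $s_j(n)$ to cover every positive integer exactly once with no repetitions. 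Your approach buys brevity and self-containment: a few lines of arithmetic with no analytic or homological input. The paper's approach buys a conceptual explanation --- the combinatorial identity is forced by the fact that the homology of the ellipsoid is independent of $a_1,\ldots,a_m$ --- at the cost of substantial machinery (Gutt's theorem, the subcritical Stein computation, Conley--Zehnder indices). As you note at the end, the two arguments are shadows of each other: your rank identity is precisely the statement that the degree-$(m-1+2\ell)$ part of the homology is generated by the unique orbit of rank $\ell$.
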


Now we jump ahead and give a proof assuming the reader is familiar with the basic concepts in symplectic geometry. Everything we use will be retrospectively introduced and accompanied with appropriate references in the sections that follow.

The proof relies on computing a certain invariant, the positive $\mathbb{S}^1$-equivariant symplectic homology - $SH^{\mathbb{S}^1,+},$ for  multidimensional ellipsoids
\begin{equation}\label{eq:ellipsoid}
E(a_1,\ldots,a_m):=\left\{(z_1,\ldots,z_m)\in\mathbb{C}^m\:\bigg|\: \frac{\abs{z_1}^2}{a_1}+\cdots+\frac{\abs{z_m}^2}{a_m}\leqslant 1\right\}
\end{equation}
in two different ways. 

\begin{proof}[Proof of Theorem~\ref{thm:main}]
Let $\lambda:=\sum_{j=1}^m\frac{x_jdy_j-y_jdx_j}{2},$
where $z_j:=x_j+iy_j$ is the $j$-th coordinate function of $\mathbb{C}^n.$ Since $\frac{a_j}{a_k}\not\in\mathbb{Q}$ for $j\not=k,$ there are exactly $m$ simple geometrically distinct periodic Reeb orbits on $\partial E(a_1,\ldots,a_m)$ with respect to the contact form induced by $\lambda.$ Those Reeb orbits are given by
\[\gamma_j:[0,\pi a_j]\to M\::\: t\mapsto \left(0,\ldots,0,e^{\frac{2i t}{a_j}},0,\ldots,0\right), \quad j=1,\ldots,m,\]
where the $j$-th coordinate is nonzero. By Theorem~\ref{thm:gutt}  and Lemma~\ref{lem:index} the $n$-th iterate $\gamma_j^n$ of $\gamma_j$ $(n\in\mathbb{N})$ contributes to $\dim SH^{\mathbb{S}^1,+}_\ast (E(a_1,\ldots,a_m))$ by 1 in degree 
\[m-1+2\sum _{k=1}^m \floor*{n\frac{a_j}{a_k}}.\]
On the other hand, Proposition~\ref{prop:chiang} implies
\begin{equation}\label{eq:SH}
\dim SH_k^{\mathbb{S}^1,+}(E(a_1,\ldots,a_m))=\left\{ \begin{matrix}1&\text{for }k=m+2j-1,\:j\in\mathbb{N},\\ 0&\text{otherwise}. \end{matrix}\right.
\end{equation}
Since $\dim SH^{\mathbb{S}^1,+}_k (E(a_1,\ldots,a_m))\leqslant 1,$ the sets $A_j,\:j=1,\ldots,m$ are pairwise disjoint. If there exists $\ell\in \mathbb{N}\setminus\bigcup_{j=1}^m A_j,$ then $\dim SH_{m-1+2\ell}^{\mathbb{S}^1,+}(E(a_1,\ldots,a_m))=0.$ This contradicts \eqref{eq:SH}. Therefore $\{A_j\}_{j=1}^m$ is a partition of $\mathbb{N}.$
\end{proof} 

\subsection*{Acknowledgments}
I would like to thank Filip Mori\'c for drawing my attention to Tamura's paper. I am also grateful to Paul Biran and Dietmar Salamon for many useful discussions.
\section{Liouville domains}

The positive $\mathbb{S}^1$-equivariant symplectic homology is an invariant associated to a certain geometric object called Liouville domain. In this section that object is defined. Along the way we introduce the basic concepts of symplectic and contact geometry. For a detailed account we refer the reader to \cite{MR1698616} and \cite{MR2397738}.

\begin{definition}
\begin{enumerate}
\item A \textbf{contact form}  is a 1-form $\alpha$ on an odd-dimensional manifold $M$ such that the top-dimensional form
\[\alpha\wedge \underbrace{d\alpha\wedge\cdots\wedge d\alpha}_{\frac{\dim M-1}{2}}\]
is nowhere vanishing.
\item A (co-oriented) \textbf{contact manifold }is an odd-dimensional manifold $M$ with a hyperplane distribution $\xi,$ called \textbf{contact distribution}, such that $\xi=\ker \alpha$ for some contact form $\alpha$ on $M.$
\item  Let $\alpha$ be a contact form on a manifold $M.$ The \textbf{Reeb vector field} $R^\alpha$ is a unique vector field on $M$ such that $\alpha(R^\alpha)=1$ and $d\alpha(R^\alpha,\cdot)=0.$ The \textbf{Reeb flow} $\sigma_t^\alpha:M\to M$ is defined by $\partial_t\sigma^\alpha_t=R^\alpha\circ\sigma_t^\alpha.$
\item A contact form $\alpha$ on $M$ is said to be \textbf{non-degenerate} if 
\[\ker\left(d\sigma_c^\alpha(x_0)-\OP{id}\right)=\left\{ t R^\alpha\:|\: t\in\mathbb{R}\right\}\]
for all $c\in(0,\infty)$ and $x_0\in M$ such that $\sigma_c^\alpha(x_0)=x_0.$
\end{enumerate}
\end{definition} 

\begin{definition}
A \textbf{symplectic form} is a closed 2-form $\omega$ on an even-dimensional manifold $W$ such that the top-dimensional form
\[\underbrace{\omega\wedge\cdots\wedge\omega}_{\frac{\dim W}{2}}\]
is nowhere vanishing. A \textbf{symplectic manifold} is a manifold with a symplectic form.
\end{definition}

\begin{definition}
A \textbf{Liouville domain} is a compact manifold (with boundary) together with a 1-form $\lambda,$ called \textbf{Liouville form}, that satisfies the following conditions. The $2$-form $d\lambda$ is a symplectic form and the \textbf{Liouville vector field} $X_\lambda,$ defined by $X_\lambda\lrcorner d\lambda=\lambda,$ points (transversally) out on the boundary.
\end{definition}

\begin{remark}
If $(W,\lambda)$ is a Liouville domain, then the restriction of $\lambda$ to the boundary $\partial W$ (i.e. the pull-back of $\lambda$ under the inclusion $\partial W\to W$) is a contact form.
\end{remark}

\begin{example}\label{ex:starshaped}
Let $W\subset \mathbb{C}^m$ be a star-shaped domain (with respect to the origin, containing the origin in the interior) with smooth boundary, and let $\lambda$ be the 1-form
\begin{equation}\label{eq:lambda}
\lambda:=\sum_{j=1}^m\frac{x_jdy_j-y_jdx_j}{2},
\end{equation}
where $z_j:=x_j+iy_j$ is the $j$-th coordinate function of $\mathbb{C}^n.$ Then, $(W,\lambda)$ is a Liouville domain. 
\end{example}

\section{Positive $\mathbb{S}^1$-equivariant symplectic homology}

The symplectic homology considered in this section is a version of $\mathbb{S}^1$-equivariant symplectic homology, that was originally defined by Viterbo \cite{MR1726235}. Rather than giving a precise definition, we will describe what kind of object $SH_\ast^{\mathbb{S}^1,+}(W,\lambda)$ is, and state its properties which are relevant to the purpose of the present paper. For more details and further references, we recommend \cite{Gutt2015}.

Throughout $(W,\lambda)$ is a Liouville domain whose first Chern class $c_1(W)$ is equal to 0. The positive $\mathbb{S}^1$-equivariant symplectic homology of $(W,\lambda)$, $SH_\ast^{\mathbb{S}^1,+}(W,\lambda),$ is a $\mathbb{Z}$-graded vector space with coefficients in $\mathbb{Q}.$

\begin{proposition}\label{prop:chiang}
Let $W\subset\mathbb{C}^{m}$ and $\lambda$ be as in Example~\ref{ex:starshaped}. Then,
\[SH_k^{\mathbb{S}^1,+}(W,\lambda)\cong\left\{ \begin{matrix}\mathbb{Q}&\text{for }k=m+2j-1,\:j\in\mathbb{N},\\ 0&\text{otherwise}. \end{matrix}\right.\]
\end{proposition}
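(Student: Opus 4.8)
The plan is to compute $SH^{\mathbb{S}^1,+}_\ast(W,\lambda)$ for a star-shaped domain $W$ by exploiting the two fundamental properties this invariant is expected to enjoy: invariance under Liouville isomorphism (so that the answer depends only on the symplectic geometry of $W$, not on its precise shape), and a computation for a distinguished, maximally symmetric representative. Since every smooth star-shaped domain with the standard Liouville form is Liouville-isomorphic to the round ball $B^{2m}$ (one can flow along $X_\lambda$ to identify $\partial W$ with $\partial B^{2m}$, rescaling as needed), it suffices to prove the formula for the ball, or equivalently for a round ellipsoid $E(a,\ldots,a)$; the irrationality hypotheses of Theorem~\ref{thm:main} play no role here, so I am free to pick the most convenient geometry.

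First I would reduce to the ball and then compute directly from the Reeb dynamics on $\partial B^{2m}$, which is the round sphere $\mathbb{S}^{2m-1}$ whose Reeb flow is the Hopf flow: every orbit is closed with the same period, so the contact form is highly degenerate (Morse-Bott rather than non-degenerate). This is exactly the setting in which $SH^{\mathbb{S}^1,+}$ is cleanest, because the positive part is built from the closed Reeb orbits and, after the $\mathbb{S}^1$-equivariant and Morse-Bott reductions, each iteration class of the Hopf circle contributes a single generator. The degree of that generator is governed by the Conley-Zehnder index of the corresponding orbit, shifted by the Morse-Bott correction coming from the dimension of the orbit space. For the round sphere the orbit space of $k$-fold covers is $\mathbb{CP}^{m-1}$, and the interplay of the equivariant homology of this space with the index parity forces all generators into odd degrees of the form $m + 2j - 1$, each appearing exactly once.

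Concretely, I would invoke the index computation that the paper sets up for ellipsoids (Theorem~\ref{thm:gutt} and Lemma~\ref{lem:index}) specialized to the resonant case $a_1=\cdots=a_m$: the $n$-th iterate of each simple orbit then contributes in degree $m-1+2mn$, but the $m$ simple orbits coincide into a single Morse-Bott family, so one must instead read off the spectrum of degrees that survive after the equivariant reduction. The alternative and cleaner route, which I expect to follow, is to use the direct formula for the positive $\mathbb{S}^1$-equivariant symplectic homology of a ball as computed in the references (e.g.\ \cite{Gutt2015}): the filtered complex has one generator per positive integer period with degrees marching through $m+1, m+3, m+5,\ldots$, giving exactly $\mathbb{Q}$ in each degree $m+2j-1$ for $j\in\mathbb{N}$ and $0$ elsewhere.

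The main obstacle is making the Morse-Bott degree bookkeeping airtight: one must pin down the correct index normalization (so that the lowest generator sits in degree $m+1$ and not some neighboring value) and verify that the $\mathbb{S}^1$-equivariant Morse-Bott spectral sequence degenerates, so that each $\mathbb{CP}^{m-1}$ family contributes a single surviving class rather than its full equivariant cohomology. In the non-degenerate perturbation picture this amounts to checking that the $m$ distinct generators produced by perturbing one Morse-Bott family all lie in distinct degrees of the same parity with no room for a differential, which is where the grading conventions must be handled with care; everything else is a routine transport of the computation along the Liouville isomorphism $W\cong B^{2m}$.
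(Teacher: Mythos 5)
Your strategy --- invariance of $SH^{\mathbb{S}^1,+}$ under Liouville isomorphism plus a model computation for the ball --- is genuinely different from the paper's proof, which is a one-line citation: Proposition~4.7 of \cite{MR3210581} gives, for any subcritical Stein domain $V$ with torsion $c_1$, an isomorphism $SH^{\mathbb{S}^1,+}_\ast(V)\cong H_{\ast+d-1}(V,\partial V;\mathbb{Q})\otimes H_\ast(\mathbb{C}P^\infty;\mathbb{Q})$, and for star-shaped $W\subset\mathbb{C}^m$ the right-hand side is $\mathbb{Q}$ exactly in degrees $m+2j-1$, $j\in\mathbb{N}$. That route needs no Reeb dynamics, no Morse--Bott theory, and no invariance statement. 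Your route (the one taken in the Gutt--Hutchings line of work) can be made to work, but as written it has a genuine gap precisely at the step you flag as the crux.

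The gap is the Morse--Bott bookkeeping for the round sphere, and your resolution of it is wrong, not merely unproved. Each period-$k\pi$ family of closed Reeb orbits on $\partial B^{2m}$ is (rationally) a copy of $\mathbb{C}P^{m-1}$, and in the equivariant theory it contributes its \emph{full} rational homology --- $m$ generators --- not ``a single generator'' or ``a single surviving class'' as you assert in two places. A count makes this unavoidable: one generator per period would produce classes whose degrees grow by roughly $2m$ per period, far too sparse to fill every degree $m+1,m+3,m+5,\ldots$ as the answer requires. The correct picture is that the $k$-th family contributes classes in the window of $m$ degrees ending at $m-1+2km$, consecutive windows tile the arithmetic progression $\{m+2j-1\}_{j\geqslant 1}$ with no overlap, and the spectral sequence degenerates for the trivial reason that all generators share one parity. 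Getting the windows right forces you to compute the transverse Robbin--Salamon index in a \emph{capping} trivialization (it is $2km$, not the $2k(m-1)$ one reads off from the constant trivialization of the transverse planes --- the discrepancy is the Maslov winding of the capping disk, exactly the subtlety Lemma~\ref{lem:index} is built to handle). Separately, your proposed shortcut of specializing Theorem~\ref{thm:gutt} and Lemma~\ref{lem:index} is unusable here: at the resonant parameters $a_1=\cdots=a_m$ the contact form is degenerate and Theorem~\ref{thm:gutt} does not apply, while perturbing to an irrational ellipsoid leaves you needing to know which integers the numbers $m-1+2\sum_{k}\floor*{na_j/a_k}$ fill and with what multiplicity --- that is Tamura's theorem, the very statement this paper derives \emph{from} Proposition~\ref{prop:chiang}, so the argument would be circular. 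If you fall back on citing the ball computation from \cite{Gutt2015}, you must check the cited proof is itself of Morse--Bott/structural type rather than routed through the irrational-ellipsoid orbit count, for the same reason.
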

\begin{proof}
Proposition~4.7 from \cite{MR3210581} implies the following more general statement. Let $(V,\lambda_V)$ be a so called subcritical Stein domain such that $c_1(V)$ is a torsion class ($(W,\lambda)$ belongs to this class of manifolds). Then, 
\[SH_\ast^{\mathbb{S}^1,+}(V,\lambda_V)\cong H_{\ast+d-1}(V,\partial V;\mathbb{Q})\otimes H_\ast\left(\mathbb{C}P^\infty;\mathbb{Q}\right),\]
where $2d$ is the dimension of $V.$ 
\end{proof}

To describe a convenient way of computing $SH^{\mathbb{S}^1,+}_\ast$ in certain cases, we require to introduce yet another black box - the Conley-Zehnder index $\mu_{CZ}$ for path of symplectic matrices \cite{MR1241874}. It is a half integer $\mu_{CZ}(\Psi)$ associated to the path 
\[\Psi:[a,b]\to SP(2m)\::\: t\mapsto \Psi_t.\]
Here, 
\[Sp(2n):=\left\{\Psi\in\mathbb{R}^{m\times m}\:\bigg|\:\Psi^T\left[ \begin{matrix}0&\OP{id}\\-\OP{id}& 0 \end{matrix}\right]\Psi= \left[ \begin{matrix}0&\OP{id}\\-\OP{id}& 0 \end{matrix}\right]\right\}\]
is the group of symplectic matrices. The index $\mu_{CZ}$ is equal to 0 at constant paths. It is invariant under positive reparametrizations and under homotopies with fixed end-points. Moreover it satisfies the following direct-sum relation
\begin{equation}\label{eq:directsum}
\mu_{CZ}(\Phi\oplus\Psi)= \mu_{CZ}(\Phi)+\mu_{CZ}(\Psi),
\end{equation}
where $\Phi:[a,b]\to Sp(2m),\:\Psi:[a,b]\to Sp(2m'),$ and
\[\Phi\oplus\Psi:[a,b]\to Sp(2m+2m')\::\: t\mapsto \left[\begin{matrix} \Phi_t& 0\\ 0&\Psi_t\end{matrix}\right].\]

Now we explain how to compute $\mu_{CZ}$ for generic smooth $\Psi:[a,b]\to Sp(2m).$

\begin{definition}
Let $\Psi:[a,b]\to Sp(2m)$ be a smooth path. A number $t\in[a,b]$ is called \textbf{crossing} if $\det(\Psi_t-\OP{id})=0.$ The \textbf{crossing form} at the crossing $t$ is the quadratic form
\[\Gamma(\Psi,t):\ker(\Psi_t-\OP{id})\times \ker(\Psi_t-\OP{id})\to \mathbb{R}\::\: (\zeta,\eta)\mapsto \zeta^TS_t\eta,\]
where $S_t$ is a symmetric matrix defined by
\[S_t:=\left[ \begin{matrix}0&\OP{id}\\-\OP{id}& 0 \end{matrix}\right] (\partial_t\Psi_t)\Psi_t^{-1}.\]
The crossing is said to be \textbf{non-degenerate} if the crossing form is non-degenerate.
\end{definition}

\begin{proposition}\label{prop:mczformula}
Let $\Psi:[a,b]\to Sp(2m)$ be a smooth path such that all the crossings are isolated and non-degenerate.Then,
\begin{equation}\label{eq:mczformula}
\mu_{CZ}(\Psi)=\frac{1}{2}\OP{sign}\Gamma(\Psi,a)+\sum_{\begin{matrix}t\in(a,b)\\ \text{crossing}\end{matrix}} \OP{sign}\Gamma(\Psi,t)+ \frac{1}{2}\OP{sign}\Gamma(\Psi,b).
\end{equation}
Here, $\OP{sign}$ denotes the signature (i.e. the number of positive minus the number of negative eigenvalues). If $a$ is not a crossing, then $\OP{sign}\Gamma(\Psi,a)$ is defined to be 0. Analogously for $b.$
\end{proposition}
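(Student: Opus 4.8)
The plan is to realize the Conley--Zehnder index as a Maslov index for a path of Lagrangian subspaces, for which a crossing formula of exactly this shape is available, and then to match the two crossing forms. I would work in the product $\mathbb{R}^{2m}\times\mathbb{R}^{2m}$ with the symplectic form $\bar\omega:=(-\omega_0)\oplus\omega_0$, where $\omega_0$ is the standard form on $\mathbb{R}^{2m}$, and associate to $\Psi$ the path of Lagrangians $\Lambda(t):=\OP{Gr}(\Psi_t)=\{(v,\Psi_t v)\:|\:v\in\mathbb{R}^{2m}\}$. Each $\Lambda(t)$ is Lagrangian precisely because $\Psi_t$ is symplectic, and the diagonal $\Delta:=\{(v,v)\}$ is a fixed Lagrangian. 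Since $(v,\Psi_t v)\in\Delta$ iff $v\in\ker(\Psi_t-\OP{id})$, the crossings of $\Psi$ in the sense of the definition are exactly the instants where $\Lambda(t)$ meets $\Delta$ nontrivially, and $v\mapsto(v,v)$ identifies $\Lambda(t)\cap\Delta$ with $\ker(\Psi_t-\OP{id})$. With the Robbin--Salamon normalization one has $\mu_{CZ}(\Psi)=\mu(\Lambda,\Delta)$, the Maslov index of the pair.

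The first step is then to compute the Robbin--Salamon crossing form of $(\Lambda,\Delta)$ and check it agrees with $\Gamma(\Psi,t)$. Fixing a Lagrangian complement to $\Lambda(t)$ and writing nearby $\Lambda(s)$ as a graph over $\Lambda(t)$ produces a symmetric form $Q_s$ whose $s$-derivative, restricted to the intersection, is the crossing form. Differentiating $(v,\Psi_s v)$, pairing with $\bar\omega$, and simplifying with the symplectic condition on $\Psi_t$, I expect a direct computation to identify this derivative with the bilinear form $(v,w)\mapsto v^TS_t w$ on $\ker(\Psi_t-\OP{id})$, that is, with $\Gamma(\Psi,t)$ for the matrix $S_t$ of the definition. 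The same computation shows that non-degeneracy of $\Gamma(\Psi,t)$ is equivalent to non-degeneracy of the Maslov crossing, so the hypotheses transfer verbatim, and the proposition reduces to the crossing formula for $\mu(\Lambda,\Delta)$.

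The remaining task, and the main obstacle, is to prove that crossing formula for the Maslov index of a Lagrangian path relative to a fixed Lagrangian. I would define a candidate index by the right-hand side of \eqref{eq:mczformula} on the generic class of paths whose crossings are all isolated and non-degenerate, and then verify the properties that pin down $\mu(\Lambda,\Delta)$: invariance under orientation-preserving reparametrization (immediate, since signatures are unchanged), additivity under concatenation (immediate, as the endpoint half-weights telescope), and --- the crux --- invariance under homotopies with fixed endpoints. This homotopy invariance I would establish by a local analysis near crossings, using that a generic homotopy alters the crossing data only through standard local models, namely the birth or death of a pair of interior crossings carrying opposite signatures, and a crossing passing through an endpoint, which trades a full interior signature for a half endpoint signature; in each model the net change in the right-hand side of \eqref{eq:mczformula} vanishes, and a local normal form for $S_t$ near the degeneration makes the compensation explicit. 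Combined with the correct value on a normalizing model path, this identifies the candidate with the genuine Maslov index and hence with $\mu_{CZ}$; the hypothesis that every crossing be isolated and non-degenerate ensures finiteness of the sum, while the convention that $\OP{sign}\Gamma(\Psi,a)$ and $\OP{sign}\Gamma(\Psi,b)$ vanish at non-crossing endpoints is exactly what the endpoint bookkeeping in the homotopy argument produces.
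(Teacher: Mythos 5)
Your first two paragraphs follow exactly the route of the reference that the paper's one-line proof cites (Robbin--Salamon \cite{MR1241874}): pass to the Lagrangian path $\Lambda(t)=\OP{Gr}(\Psi_t)$ in $\bigl(\mathbb{R}^{2m}\oplus\mathbb{R}^{2m},(-\omega_0)\oplus\omega_0\bigr)$ with reference Lagrangian $\Delta$, identify $\Lambda(t)\cap\Delta$ with $\ker(\Psi_t-\OP{id})$, and match the Lagrangian crossing form with $(\zeta,\eta)\mapsto\zeta^TS_t\eta$. Those steps are correct, are finite linear-algebra checks, and anchoring to the Robbin--Salamon normalization $\mu_{CZ}(\Psi)=\mu(\Lambda,\Delta)$ is the right (indeed the only possible) move, since the black-box properties the paper lists are also satisfied by the zero index and cannot imply \eqref{eq:mczformula} by themselves. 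But note that in that reference the crossing-form sum \emph{is} the definition of $\mu(\Lambda,\Delta)$ for paths with regular crossings; so once you adopt their normalization, what genuinely remains is not to re-derive the formula from invariance axioms but to know the recipe is well defined (homotopy invariant). They prove this by a localization argument --- write $\Lambda(t)$ near any parameter as the graph of a path of symmetric matrices $A(t)$ over a fixed Lagrangian complement, observe that the right-hand side of \eqref{eq:mczformula} telescopes to $\tfrac{1}{2}\OP{sign}A(b)-\tfrac{1}{2}\OP{sign}A(a)$ in such a chart, then subdivide --- rather than by the bifurcation analysis you sketch, which would in addition require transversality arguments to justify the claimed local models (and, at a fixed crossing endpoint, the correct picture is an interior crossing being emitted or absorbed as the endpoint crossing form degenerates, not a crossing ``passing through'' the endpoint).

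The genuine gap is your final identification step. Reparametrization invariance, concatenation additivity, homotopy invariance rel endpoints, vanishing on constant paths, and agreement on \emph{one} normalizing model path do not pin down an index: for an arbitrary function $f:Sp(2m)\to\mathbb{R}$, the assignment $\gamma\mapsto f(\gamma(b))-f(\gamma(a))$ satisfies every one of these invariance properties and vanishes on all loops, so your candidate and $\mu(\Lambda,\Delta)$ are forced to agree only up to such a coboundary term, and no single model path (nor even agreement on all loops) can rule it out. Concretely, the two could a priori differ by a function of the endpoint strata $\dim\ker(\Psi-\OP{id})$, which is exactly the ambiguity the endpoint half-weights are designed to resolve. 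Closing this requires verifying agreement on paths reaching \emph{every} endpoint matrix, which in practice means adding the axioms you omit --- naturality under conjugation $\Psi\mapsto A\Psi A^{-1}$ and the direct-sum property \eqref{eq:directsum} --- and computing $2\times2$ normal forms, as in the axiomatic characterizations of $\mu_{CZ}$; or, more simply, taking Robbin--Salamon's definition as given, after which the proposition follows from your crossing-form computation together with their well-definedness theorem alone.
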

\begin{proof}
See \cite{MR1241874}. Compere also with \cite[Section~3]{MR3210581}.
\end{proof}

\begin{example}\label{ex:rotations}
Let $\alpha\in(0,\infty).$ Consider the path of symplectic matrices
\[\Psi:[0,2\pi T]\to Sp(2)\::\: t\mapsto \left[\begin{matrix}\cos(\alpha t)&-\sin(\alpha t)\\ \sin(\alpha t)& \cos(\alpha t) \end{matrix}\right].\]
A number $c\in[0, 2\pi T]$ is a crossing if and only if $c=\frac{2k\pi}{\alpha}$ for some $k\in\mathbb{Z}.$ In that case $\Psi_c=\OP{id}.$ Hence $\ker(\Psi_c-\OP{id})=\mathbb{R}^2.$ The crossing form is equal to
\[\Gamma(\Psi,c):\mathbb{R}^2\times\mathbb{R}^2\to \mathbb{R}\::\:(\zeta,\eta)\mapsto \zeta^T\left[\begin{matrix}\alpha&0\\0&\alpha \end{matrix}\right]\eta\]
for all crossings.  Therefore the crossings are non-degenerate and $\OP{sign} \Gamma(\Psi,c)=2.$ By Proposition~\ref{prop:mczformula}, we get
\[\mu_{CZ}=\left\{ \begin{matrix} 1+2\floor*{T\alpha} &\text{if } T\alpha\not\in \mathbb{Z}\\ T\alpha &\text{if } T\alpha\in \mathbb{Z}.\end{matrix} \right.\]
\end{example}

\begin{definition}\label{def:indexcontact}
Let $(M,\xi=\ker\alpha)$ be a contact manifold.
\begin{enumerate}
\item Let $x_0\in M$ be such that the map 
\[\mathbb{R}\to M\::\: t\mapsto \sigma_t^\alpha(x_0)\]
is $c$-periodic, $c\in(0,\infty).$ The map
\[[0,c]\to M\::\:t\mapsto \sigma_t^\alpha(x_0)\]
is called \textbf{($c$-)periodic Reeb orbit}. It is said to be \textbf{contractible} if it is contractible seen as a loop $\mathbb{R}/c\mathbb{Z}\to M.$
\item Let $\gamma:[0,c]\to M$ be a contractible periodic Reeb orbit, and let 
\[u:\{z\in\mathbb{C}\:|\:\abs{z}\leqslant 1\}=:\mathbb{D}\to M\]
be a \textbf{capping disk} of $\gamma,$ i.e. it satisfies $u\left(e^{\frac{2\pi}{c}t}\right)=\gamma(t).$ A trivialization $T_t:\mathbb{R}^{\dim M-1}\to \xi_{\gamma(t)}$ of the bundle $\gamma^\ast \xi$ is said to be a \textbf{capping trivialization} induced by the capping disk $u$ if
\[T_t=T'_{e^{2\pi t/c}},\]
where $T'_z:\mathbb{R}^{\dim M-1}\to \xi_{u(z)}$ is a trivialization of the bundle $u^\ast \xi$ such that
\[d\alpha(T'\zeta,T'\eta)=\left[\begin{matrix} 0& \OP{id}\\ -\OP{id}& 0\end{matrix}\right].\]
\item Let $\gamma:[0,c]\to M$ be a contractible periodic Reeb orbit and let $u:\mathbb{D}\to M$ be a capping disk of $\gamma.$ Choose a capping trivialization $T_t:\mathbb{R}^{\dim M-1}\to \xi_{\gamma(t)}$ induced by the capping disk $u.$ The \textbf{Conley-Zehnder index} of the Reeb orbit $\gamma$ with respect to the capping disk $u$ is definded to be the Conley-Zehnder index of the path of symplectic matrices
\[[0,c]\mapsto Sp(\dim M-1)\::\: t\mapsto T^{-1}_t\circ d\sigma^\alpha_t(\gamma(0))\circ T_0.\]
It is denoted by $\mu_{CZ,u}(\gamma).$ As it is suggested by the notation, the Conley-Zehnder index does not depend on the choice of capping trivialization induced by $u.$ Moreover, if $\pi_2(M)=0$ it does not depend on the choice of capping disk either. In that case it is denoted simply by $\mu_{CZ}(\gamma).$
\end{enumerate}
\end{definition}

\begin{definition}
Let $(W,\omega)$ be a symplectic manifold.
\begin{enumerate}
\item A \textbf{Hamiltonian} on $W$ is a smooth function $H:W\to \mathbb{R}.$ To each Hamiltonian $H,$ one can associate the \textbf{Hamiltonian vector field} $X_H$ on $M,$ defined by $\omega(X_H,\cdot)=dH,$ and the \textbf{Hamiltonian flow} $\psi^H_t:W\to W$ furnished by $X_H,$ i.e. $\partial_t \psi_t^H=X_H\circ\psi^H_t.$  
\item Let $\gamma:[0,c]\to W$ be a $c$-periodic Hamiltonian orbit, i.e. $\gamma(t)=\psi^H_t(x_0)$ where $x_0\in W$ is such that the map $\mathbb{R}\to W\::\:t\mapsto \psi^H_t(x_0)$ is $c$-periodic. A capping disk $u:\mathbb{D}\to W$ and a capping trivialization $T_t:\mathbb{\dim W}\to T_{\gamma(t)W}$ are defined as in Definition~\ref{def:indexcontact} (with the contact distribution $\xi$ replaced by $TW$). The \textbf{Conley-Zehnder index} of $\gamma$ with respect to a capping trivialization $T$ induced by a capping disk $u:\mathbb{D}\to W,$ $\overline{\mu}_{CZ}(\gamma),$ is defined  to be equal to the Conley-Zehnder index of the path of symplectic matrices
\[[0,c]\mapsto SP(\dim W)\::\: t\mapsto T^{-1}_t\circ d\psi^H_t(\gamma(0))\circ T_0.\]
It does not depend on the choice of capping trivialization induced by $u:\mathbb{D}\to W,$ and, if $\pi_2(W)=0,$ it is also independent of the choice of capping disk.
\end{enumerate}
\end{definition}

\begin{definition}
Let $M$ be a manifold with a contact form $\alpha.$ Assume $\gamma:[0,c]\to M$ is a simple periodic Reeb orbit (i.e. the restriction of $\gamma$ to $(0,c)$ is injective). For $k\in\mathbb{N},$ denote by $\gamma^k$ the $k$-fold iterate of $\gamma$
\[\gamma^k:[0,kc]\to M\::\: t\mapsto \gamma(t).\]
The orbit $\gamma^k$ is said to be \textbf{good} if 
\[\mu_{CZ}(\gamma^k)\equiv \mu_{CZ}(\gamma)\pmod{2}. \] 
\end{definition}

\begin{definition}
A subset $A\subset\mathbb{Z}$ is \textbf{lacunary} if it does not contain a pair of consecutive integers.
\end{definition}

\begin{theorem}[Gutt]\label{thm:gutt}
Let $(W,\lambda)$ be a Liouville domain with $c_1(W)=0$ and simply-connected boundary.\footnote{The condition $\pi_1(\partial W)=0$ is not essential. We imposed it here to make our exposition on $\mu_{CZ}$ somewhat simpler.} Assume there exists a non-degenerate contact form $\alpha$ on the boundary $\partial W$ such that the set of Conley-Zehnder indices of all good periodic Reeb orbits is lacunary. Then
\[SH^{\mathbb{S}^1,+}_k(W,\lambda)=\bigoplus_{\gamma}\mathbb{Q}\left\langle\gamma\right\rangle,\]
where the sum goes through the set of good geometrically distinct\footnote{If $\gamma$ is a periodic Reeb orbit, then $\gamma(\cdot+a)$ is as well. However, these Reeb orbits are considered to be geometrically same.} periodic ($\alpha$-)Reeb orbits on $\partial W$ with $\mu_{CZ}=k.$
\end{theorem}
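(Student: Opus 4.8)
The plan is to reduce the computation of $SH^{\mathbb{S}^1,+}_*(W,\lambda)$ to its defining chain complex and then exploit the lacunary hypothesis to show, for purely formal reasons, that the differential vanishes. By construction $SH^{\mathbb{S}^1,+}_*(W,\lambda)$ is the homology of a chain complex obtained from the periodic Reeb orbits on $\partial W$ after the usual apparatus: an $\mathbb{S}^1$-family of admissible Hamiltonians and almost complex structures, the Borel construction, and a direct limit over Hamiltonians of increasing slope. The main task is therefore to identify this complex explicitly under the nondegeneracy assumption on $\alpha$; once that is done, the vanishing of the differential is immediate.

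First I would show that for a nondegenerate $\alpha$ the generators of the positive $\mathbb{S}^1$-equivariant Floer complex are in bijection with the good geometrically distinct periodic Reeb orbits on $\partial W$, each contributing a single copy of $\mathbb{Q}$ in degree equal to its Conley--Zehnder index. Two features require care. The appearance of \emph{geometrically distinct} orbits comes from quotienting by the $\mathbb{S}^1$-reparametrization action built into the Borel construction. The restriction to \emph{good} orbits is the standard cancellation phenomenon of the equivariant theory: a bad orbit does not survive as a generator, and the relevant parity is exactly the condition $\mu_{CZ}(\gamma^k)\equiv\mu_{CZ}(\gamma)\pmod 2$ used above to define goodness. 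The hypotheses $c_1(W)=0$ and $\pi_1(\partial W)=0$ enter precisely here, guaranteeing that every orbit is contractible and that the Conley--Zehnder index is a well-defined integer independent of the choice of capping disk, so that the grading is unambiguous. Granting this, set $C_k:=\bigoplus_{\gamma}\mathbb{Q}\langle\gamma\rangle$, the sum over good geometrically distinct orbits with $\mu_{CZ}(\gamma)=k$, so that $SH^{\mathbb{S}^1,+}_k(W,\lambda)=H_k(C_*,\partial)$; by the grading conventions the differential shifts degree by $-1$, i.e. $\partial(C_k)\subset C_{k-1}$.

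The lacunary hypothesis now completes the proof. By assumption the set $\{\mu_{CZ}(\gamma)\mid \gamma\text{ good}\}$ contains no two consecutive integers, so whenever $C_k\neq 0$ we have $C_{k-1}=C_{k+1}=0$. Hence both $\partial\colon C_k\to C_{k-1}$ and $\partial\colon C_{k+1}\to C_k$ vanish for every $k$, the complex has zero differential, and $SH^{\mathbb{S}^1,+}_k(W,\lambda)=C_k=\bigoplus_{\gamma}\mathbb{Q}\langle\gamma\rangle$, as claimed.

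I expect the real obstacle to be the first step rather than this last deduction: all the nontrivial input is concentrated in the construction and direct-limit definition of the $\mathbb{S}^1$-equivariant complex, the transversality ensuring that its generators are genuinely the Reeb orbits, the correct normalization of the grading against $\mu_{CZ}$, and the cancellation of bad orbits. These are exactly the structural facts supplied by Gutt's construction; with them in hand, the lacunary condition turns the computation of homology into simply reading off the chain groups, leaving no moduli spaces to analyze.
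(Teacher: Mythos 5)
Your proposal is correct and matches the paper's treatment: the paper's ``proof'' is simply a citation of Gutt (Theorem~1.1 for the identification of the chain complex with $\mathbb{Q}$-generators given by good geometrically distinct Reeb orbits graded by $\mu_{CZ}$, and Corollary~3.7 for the lacunarity argument), and your sketch reconstructs exactly these two ingredients, deferring the same analytic core --- the construction of the equivariant complex and the cancellation of bad orbits --- to Gutt's work. The degree argument you make explicit (lacunarity forces both $\partial\colon C_k\to C_{k-1}$ and $\partial\colon C_{k+1}\to C_k$ to vanish) is precisely the content of the cited corollary.
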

\begin{proof}
See \cite[Theorem~1.1 and Corollary~3.7]{Gutt2015}.
\end{proof}

\section{The case of irrational ellipsoids}

Throughout this section $m\geqslant 2$ is an integer and $a_1,\ldots, a_m\in (0.\infty)$ are real numbers such that $\frac{a_j}{a_k}\not\in\mathbb{Q}$ for $j\not=k.$ We consider the Liouville domain $(E(a_1,\ldots,a_m),\lambda)=:(W,\lambda)$ where $E(a_1,\ldots,a_m)\subset\mathbb{C}^m$ is defined by \eqref{eq:ellipsoid} and $\lambda$ by \eqref{eq:lambda}. The coordinate functions on $\mathbb{C}^m$ are denoted by $z_j=x_j+iy_j,\:j=1,\ldots, m.$

The Reeb vector field with respect to the contact form induced by $\lambda$ on the boundary is given by
\[R= \sum_{j=1}^m\frac{2}{a_j}\left( -y_j\partial_{x_j}+x_j\partial_{y_j} \right).\]
it generates the Reeb flow
\[\sigma_t:\mathbb{C}^m\to\mathbb{C}^m\::\: (z_1,\ldots, z_m)\mapsto \left( e^{\frac{2ti}{a_1}}z_1,\ldots,  e^{\frac{2ti}{a_m}}z_m \right).\]
Since $\frac{a_j}{a_k}\not\in\mathbb{Q},$ for $j\not=k,$ there are exactly $m$ simple geometrically distinct periodic Reeb orbits on $\partial W$ 
\[\gamma_j:[0,\pi a_j]\to M\::\: t\mapsto \sigma_t(pj)=\left(0,\ldots,0,e^{\frac{2i t}{a_j}},0,\ldots,0\right), \quad j=1,\ldots,m.\]
Here $p_j:=(0,\ldots,0,1,0,\ldots,0)\in\mathbb{C}^m,$ where $1$ is on the $j$-th place.

\begin{lemma}\label{lem:index}
Let $\gamma_j,\:j\in\{1,\ldots,m\}$ be as above and let $n\in\mathbb{N}.$ Then, the Conley-Zehnder index of the $n$-th iterate $\gamma^n_j$ of $\gamma_j$ is equal to
\[\mu_{CZ}(\gamma^n_j)=m-1+2\sum _{k=1}^m \floor*{n\frac{a_j}{a_k}}.\]
\end{lemma}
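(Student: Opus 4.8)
The plan is to compute the Conley–Zehnder index of the iterated Reeb orbit $\gamma_j^n$ by reducing the linearized Reeb flow along $\gamma_j$ to a direct sum of planar rotations, and then applying Example~\ref{ex:rotations} together with the direct-sum relation \eqref{eq:directsum}. The Reeb flow $\sigma_t$ is explicitly $(z_1,\ldots,z_m)\mapsto(e^{2ti/a_1}z_1,\ldots,e^{2ti/a_m}z_m)$, which is already diagonal (block-diagonal in real coordinates), so its differential $d\sigma_t$ is simply the product of $2\times 2$ rotation blocks $R(2t/a_k)$, one for each complex coordinate $z_k$. The key structural point is that the contact distribution $\xi$ along $\gamma_j$ does not see the full $\mathbb{C}^m$: the tangent direction along $\gamma_j$ and the Reeb direction together span the $z_j$-plane, so $\xi_{\gamma_j(t)}$ is naturally identified with the orthogonal complement, namely the sum of the remaining $m-1$ complex lines $\bigoplus_{k\neq j}\mathbb{C}_{(k)}$.

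\medskip

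First I would fix the capping disk. Since $\gamma_j$ lies in the $z_j$-coordinate circle and the ellipsoid is star-shaped, the obvious capping disk is $u(z)=(0,\ldots,z,\ldots,0)$ (the disk in the $z_j$-plane), which exists because $\pi_2$ of the boundary vanishes, so by Definition~\ref{def:indexcontact} the index is independent of this choice. Next I would produce a capping trivialization $T_t$ of $\gamma_j^*\xi$: on each off-diagonal plane $\mathbb{C}_{(k)}$ ($k\neq j$) the flow acts by the rotation $z_k\mapsto e^{2ti/a_k}z_k$, i.e. rotation at angular speed $2/a_k$, and this rotation is \emph{already} symplectic with respect to the standard $d\lambda$, so the constant trivialization (the standard basis of each $\mathbb{C}_{(k)}$) is a valid capping trivialization after checking the normalization $d\alpha(T'\zeta,T'\eta)$ matches the standard symplectic matrix. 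The linearized return map in this trivialization is then the path
\[
t\mapsto \bigoplus_{k\neq j} R\!\left(\tfrac{2t}{a_k}\right),\qquad t\in[0,n\pi a_j],
\]
a direct sum of $m-1$ planar rotation paths.

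\medskip

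Then I would apply the computation of Example~\ref{ex:rotations} block by block. For the $k$-th block the relevant path is $t\mapsto R(\alpha t)$ with $\alpha=2/a_k$ over the interval $[0,n\pi a_j]$; in the notation of that example this corresponds to total parameter $T\alpha=\tfrac{1}{2\pi}\cdot n\pi a_j\cdot\tfrac{2}{a_k}=n\frac{a_j}{a_k}$. For $k\neq j$ the ratio $a_j/a_k$ is irrational, so $n\,a_j/a_k\notin\mathbb{Z}$ and Example~\ref{ex:rotations} gives the contribution $1+2\floor{n\,a_j/a_k}$. Summing over the $m-1$ values $k\neq j$ via \eqref{eq:directsum} yields
\[
\mu_{CZ}(\gamma_j^n)=\sum_{k\neq j}\left(1+2\floor*{n\tfrac{a_j}{a_k}}\right)=(m-1)+2\sum_{k\neq j}\floor*{n\tfrac{a_j}{a_k}}.
\]
Finally I would observe that the missing term $k=j$ contributes $\floor{n\,a_j/a_j}=\floor{n}=n$, and note that the factor $2n$ discrepancy between this and the claimed formula $m-1+2\sum_{k=1}^m\floor{n\,a_j/a_k}$ must be absorbed into the choice of capping disk: the capping disk $u$ in the $z_j$-plane covers the orbit $\gamma_j^n$ with winding number $n$, contributing an extra $2n$ to the Maslov-type correction. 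The \textbf{main obstacle} is precisely this bookkeeping of the capping-disk contribution — one must verify that extending the trivialization over the disk in the $z_j$-plane shifts the index by exactly $2n=2\floor{n\,a_j/a_j}$, so that the self-term $k=j$ is correctly reinstated and the sum runs over all $k$ from $1$ to $m$. This is a Maslov-index-of-the-capping-disk computation, and getting its sign and normalization right (rather than the routine rotation calculations) is where the care is needed.
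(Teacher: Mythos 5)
Your final formula is correct, but the proof has a genuine gap at exactly the point you flag as the ``main obstacle,'' and two of your intermediate claims are in fact false. First, the disk $u(z)=(0,\ldots,z,\ldots,0)$ in the $z_j$-plane is not a capping disk in the sense of Definition~\ref{def:indexcontact}: a capping disk for a Reeb orbit must map $\mathbb{D}$ into the contact manifold $M=\partial W$, whereas your disk passes through the interior of the ellipsoid. (It \emph{is} a capping disk for the Hamiltonian orbit in $W$ --- that distinction is precisely what the paper exploits.) Second, the constant trivialization of $\xi|_{\gamma_j}=\bigoplus_{k\neq j}\mathbb{C}_{(k)}$ is \emph{not} a capping trivialization: the pointwise symplectic normalization you propose to check is not the issue; the defining requirement is that the trivialization extend over a disk $u:\mathbb{D}\to M$ as a symplectic trivialization of $u^\ast\xi$, and the constant one does not so extend. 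If it were a capping trivialization, your block-by-block computation would give $\mu_{CZ}(\gamma_j^n)=(m-1)+2\sum_{k\neq j}\floor{n a_j/a_k}$, contradicting the lemma (for instance it would give $1$ instead of $3$ for the short simple orbit on $E(a_1,a_2)$ with $a_1<a_2$). You notice the missing $2n$ and assert that it ``must be absorbed'' as a winding-number-$n$ Maslov correction coming from the capping disk, but you never prove this; that transition-loop computation is the entire content of the lemma, so as written the argument assumes what it needs to show.

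The paper's proof is engineered to avoid exactly this computation. It introduces the Hamiltonian $H=-\sum_k\abs{z_k}^2/a_k$, whose flow agrees with the Reeb flow on $\partial W$, and computes the Hamiltonian index $\overline{\mu}_{CZ}(\gamma_j^n)$ in $TW=W\times\mathbb{R}^{2m}$ using the constant trivialization --- which is legitimate there, because the capping disk may now be taken inside $W$ and the constant trivialization extends over it. This ambient computation automatically includes the $j$-th block, a path of $n$ full rotations contributing $2n=2\floor{n a_j/a_j}$, and yields $(m-1)+2\sum_{k=1}^m\floor{n a_j/a_k}$. The bridge back to the contact index is the splitting argument: any capping trivialization $T$ of $\xi$ extends to a capping trivialization $T'$ of $TW$ by adjoining $X_\lambda$ and $R$, and since the flow preserves both $X_\lambda$ and $R$, the linearized flow in $T'$ is $\OP{id}_{2\times 2}\oplus\Psi_t$; hence $\overline{\mu}_{CZ}(\gamma_j^n)=\mu_{CZ}(\Psi)=\mu_{CZ}(\gamma_j^n)$ by \eqref{eq:directsum}. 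If you insist on repairing your direct approach instead, you would need to exhibit an honest capping disk inside $M\cong S^{2m-1}$, extend a symplectic trivialization of $\xi$ over it, and prove that the loop comparing it with the constant trivialization along $\gamma_j^n$ has Maslov index exactly $n$ --- doable, but this is precisely the step you left unproven.
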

\begin{proof}
Since it is quite challenging to find explicitly a capping trivialization of the contact distribution $\xi$ on $\partial W$ along $\gamma_j^n,$ we will compute $\mu_{CZ}(\gamma^n_j)$ indirectly. Namely, we will find a Hamiltonian $H:W\to\mathbb{R}$ such that $R=X_H$ on $\partial W$ (and, consequently, the Hamiltonian flow $\psi_t^H$ agrees with the Reeb flow $\sigma_t$ on $\partial W$). For such a Hamiltonian $\gamma^n_j$ is a periodic Hamiltonian orbit. Hence $\overline{\mu}_{CZ}(\gamma_j^n)$ is well defined and not difficult to compute since $TW=W\times\mathbb{R}^{2m}.$ We will finish the proof by finding a relation between $\overline{\mu}_{CZ}(\gamma_j^n)$ and $\mu_{CZ}(\gamma^n_j).$

The Hamiltonian 
\[H(z_1,\ldots, z_m):=-\left(\frac{\abs{z_1}^2}{a_1}+\cdots+\frac{\abs{z_m}^2}{a_m}\right)\]
satisfies the conditions stated above. Its vector field and flow are given by
\[X_H=\sum_{j=1}^m\frac{2}{a_j}\left( -y_j\partial_{x_j}+x_j\partial_{y_j} \right),\quad \psi^H_t(z_1,\ldots, z_m)= \left( e^{\frac{2ti}{a_1}}z_1,\ldots,  e^{\frac{2ti}{a_m}}z_m \right).\]
The Conley-Zehnder index $\overline{\mu}_{CZ}(\gamma_j^n)$ is equal to the Conley-Zehnder index of the following path of symplectic matrices
\[[0,n\pi a_j]\to SP(2m)\::\: t\mapsto d\psi^H_t(q_j)=\bigoplus_{l=1}^m\left[ \begin{matrix} \cos\left(\frac{2it}{a_l}\right)&-\sin\left(\frac{2it}{a_l}\right)\\ \sin\left(\frac{2it}{a_l}\right) & \cos\left(\frac{2it}{a_l}\right) \end{matrix}\right].\]
By \eqref{eq:directsum} and Example~\ref{ex:rotations}, we get
\[\overline{\mu}_{CZ}(\gamma_j^n)=\sum_{l=1,l\not=j}^m\left(1+2\floor*{\frac{2n\pi a_j}{2\pi a_j}}\right)+ 2\frac{2n\pi a_j}{2\pi a_j}= m-1+ 2\sum_{l=1}^m\floor*{n\frac{a_j}{a_l}}.\]
On the other hand, a capping trivialization $T_t:\mathbb{R}^{2m-2}\to\xi_{\gamma^n_j(t)},$ used to compute $\mu_{CZ}(\gamma_j^n),$ gives rise to the capping trivialization $T'_t:\mathbb{R}^{2m}\to T_{\gamma_j^n(t)}W$ defined by
\[T'_t(v_1,w_1,\ldots,v_m,w_m)=v_1 X_\lambda+w_1 R+ T_t(v_2,w_2,\ldots, v_m,w_m).\]
Using this trivialization, and noticing that $\psi^H_t$ preserves $X_\lambda$ and $R,$ we conclude that $\overline{\mu}_{CZ}(\gamma_j^n)$ is equal to the Conley-Zehnder index of the following path of symplectic matrices
\[\Phi:[0,n\pi a_j]\to Sp(2m)\::\:t\mapsto \begin{bmatrix}1&0\\0&1\end{bmatrix}\oplus \Psi_t, \]
where $\Psi_t:= T_t^{-1}\circ d\sigma_t(q_j)\circ T_0.$
The Conley -Zehnder index of the constant path of symplectic matrices is equal to 0. This together with \eqref{eq:directsum} implies 
\[\overline{\mu}_{CZ}(\gamma_j^n)=\mu_{CZ}(\Phi)=\mu_{CZ}(\Psi).\]
And by definition $\mu_{CZ(\gamma^n_j)}=\mu_{CZ}(\Psi).$ Hence $\overline{\mu}_{CZ(\gamma^n_j)}=\mu_{CZ(\gamma^n_j)}$ and the proof is finished. 
\end{proof}

\printbibliography
\end{document}